\newif\ifpreprint
\pgfplotsset{compat=newest} %
\pgfplotsset{table/search path={data},}
\newcommand{\imagunit}{\mathrm{i}}
\newcommand{\bszero}{\boldsymbol{0}} 
\newcommand{\bsh}{{\boldsymbol{h}}}    
\newcommand{\bsg}{\boldsymbol{g}} 
\newcommand{\bsp}{\boldsymbol{p}}    
\newcommand{\bsx}{\boldsymbol{x}}    
\newcommand{\bsu}{\boldsymbol{u}}
\newcommand{\bsy}{\boldsymbol{y}}    
\newcommand{\bsz}{\boldsymbol{z}}    
\newcommand{\Z}{\mathbb{Z}}
\newcommand{\R}{\mathbb{R}}
\newcommand{\C}{\mathbb{C}}
\newcommand{\calA}{\mathcal{A}}
\newcommand{\tpmod}[1]{~(\operatorname{mod}{#1})} 
\newcommand{\twopii}{2\pi \imagunit\,}
\newcommand*{\T}{\mathbb{T}}
\newcommand{\rme}{\mathrm{e}}
\newcommand{\ad}{\mathrm{ad}}
\newcommand{\tsc}{\gamma} 
\DeclareMathOperator{\diag}{diag}
\newcommand{\Deltat}{{\Delta t}}
\definecolor{darkred}{RGB}{139,0,0}
\definecolor{darkgreen}{RGB}{0,130,70}
\definecolor{darkmagenta}{RGB}{139,0,139}
\definecolor{darkorange}{RGB}{180,60,0}
\newtheorem{theorem}{Theorem}
\newtheorem{lemma}[theorem]{Lemma}
\newcommand{\logLogSlopeTriangle}[5]
{

    \pgfplotsextra
    {
        \pgfkeysgetvalue{/pgfplots/xmin}{\xmin}
        \pgfkeysgetvalue{/pgfplots/xmax}{\xmax}
        \pgfkeysgetvalue{/pgfplots/ymin}{\ymin}
        \pgfkeysgetvalue{/pgfplots/ymax}{\ymax}

        \pgfmathsetmacro{\xArel}{#1}
        \pgfmathsetmacro{\yArel}{#3}
        \pgfmathsetmacro{\xBrel}{#1-#2}
        \pgfmathsetmacro{\yBrel}{\yArel}
        \pgfmathsetmacro{\xCrel}{\xArel}

        \pgfmathsetmacro{\lnxB}{\xmin*(1-(#1-#2))+\xmax*(#1-#2)} 
        \pgfmathsetmacro{\lnxA}{\xmin*(1-#1)+\xmax*#1} 
        \pgfmathsetmacro{\lnyA}{\ymin*(1-#3)+\ymax*#3} 
        \pgfmathsetmacro{\lnyC}{\lnyA+#4*(\lnxA-\lnxB)}
        \pgfmathsetmacro{\yCrel}{\lnyC-\ymin)/(\ymax-\ymin)} 

        \coordinate (A) at (rel axis cs:\xArel,\yArel);
        \coordinate (B) at (rel axis cs:\xBrel,\yBrel);
        \coordinate (C) at (rel axis cs:\xCrel,\yCrel);

        \draw[#5]   (A)-- node[pos=0.5,anchor=north] {1}
                    (B)-- 
                    (C)-- node[pos=0.5,anchor=west] {#4}
                    cycle;
    }
}
  \newcommand{\email}[1]{\protect\href{mailto:#1}{#1}}
\pgfplotsset{compat=1.13}
\title{Rank-$1$ lattices and higher-order exponential splitting for the time-dependent Schrödinger equation}
\author{Yuya Suzuki\thanks{NUMA section, Computer Science department, KU Leuven, Belgium (\email{yuya.suzuki@cs.kuleuven.be}, \email{dirk.nuyens@cs.kuleuven.be}).} \and Dirk Nuyens\footnotemark[1]}
\begin{document}


\maketitle

\abstract{
In this paper, we propose a numerical method to approximate the solution of the time-dependent Schrödinger equation with periodic boundary condition in a high-dimensional setting. We discretize space by using the Fourier pseudo-spectral method on rank-$1$ lattice points, and then discretize time by using a higher-order exponential operator splitting method.
In this scheme the convergence rate of the time discretization  depends on properties of the spatial discretization. We prove that the proposed method, using rank-$1$ lattice points in space, allows to obtain higher-order time convergence, and, additionally, that the necessary condition on the space discretization can be independent of the problem dimension~$d$.
We illustrate our method by numerical results from 2 to 8 dimensions which show that such higher-order convergence can really be obtained in practice.
} 


\section{Introduction}

Rank-$1$ lattice points have been widely used in the context of high-dimensional problems. Their traditional usage is in numerical integration, see, e.g., \cite{MR2479214,MR1442955} and references therein. In this work, we use rank-$1$ lattice points for function approximation, to approximate the solution of the time-dependent Schrödinger equation (TDSE). Function approximation using rank-$1$ lattice points has recently received more attention, see, e.g., \cite{MR3671595,MR2400322,KWW09,MR2011715,2018arXiv180806357S,MR2878018}.
In \cite{MR2011715}, Li and Hickernell introduced the pseudo-spectral Fourier collocation method using rank-$1$ lattice rules. Due to the rank-$1$ lattice structure, Fourier pseudo-spectral methods can be efficiently implemented using one-dimensional Fast Fourier transformations (FFTs). This is well known, and we state the exact form in Theorem~\ref{thm:rank-1-properties} together with other useful properties of approximations on rank-$1$ lattice points.  

To simulate many particles in the quantum world is a computationally challenging problem. For the TDSE, the dimensionality of the problem increases with the number of particles of the system. In the present paper, the following form is considered:
\begin{align}
  \imagunit \, \tsc \, \frac{\partial u}{\partial t}(\bsx, t)
  &=
  -\frac{ \tsc^2}{2} \, \Delta u(\bsx, t) + v{(\bsx)} \, u(\bsx, t),
  \label{eq:tdse} 
  \\
  \notag
  u(\bsx,0)
  &=
  g(\bsx)
  ,
\end{align}
where $\imagunit$ represents the imaginary unit, $\bsx$ is the spatial position in the $d$-dimensional torus $\T^d = \T([0,1]^d) \simeq [0,1]^d$, the time $t$ is positive valued, and $\tsc$ is a small positive parameter. The function $u(\bsx,t)$ is the sought solution, while $v(\bsx)$ and $g(\bsx)$ are the potential and initial conditions respectively. 
The Laplacian can be interpreted as $\Delta = \sum_{i=1}^M \sum_{j=1}^D \partial^2 / \partial x_{i,j}^2=\sum_{i=1}^d \partial^2 / \partial x_{i}^2$ where $M$ is the number of particles and $D$ is the physical dimensionality with $MD=d$.
We note that the above form of the TDSE becomes equivalent after substitution to the following form which is common in the context of physics:
\begin{align*}
  \imagunit \, \hbar \, \frac{\partial \psi}{\partial t}(\bsx,t)
  &=
  -\frac{\hbar^2}{2m} \,  \Delta \psi(\bsx, t) + v(\bsx) \, \psi(\bsx, t),
\end{align*}%
where $\hbar$ is the reduced Planck constant and $m$ is the mass.

The form~\eqref{eq:tdse} of the TDSE has been studied from various perspectives of numerical analysis \cite{MR1799313,MR2377257,MR2399406,MR2486135}. In the present paper, we focus on two perspectives; high-dimensionality and higher-order convergence in time stepping.
For the first point, Gradinaru \cite{MR2377257} proposed to use sparse grids for the physical space. In \cite{2018arXiv180806357S}, the current authors used rank-$1$ lattice points to prove second order convergence for the time discretization using Strang splitting and numerically compared results with the sparse grid approach from \cite{MR2377257}. The numerical result using rank-$1$ lattices showed the expected second order convergence even up to $12$~dimensions. Hence rank-$1$ lattice points perform thereby much better than the sparse grid approach.
The second point, higher-order convergence in time stepping, is successfully achieved by Thalhammer \cite{MR2399406} using higher-order exponential operator splitting. In that paper, the spatial discretization was done by a full grid and therefore was limited to lower dimensional cases ($d\leq3$).

The rest of this paper is organized as follows: Section~\ref{sec:numerical-method} describes the proposed method consisting of the higher-order exponential splitting method and Fourier pseudo-spectral method using rank-$1$ lattices. 
Section~\ref{sec:numerical} shows numerical results with various settings. The main aim here is to show higher-order time stepping convergence in higher-dimensional cases. 
Finally, Section~\ref{sec:conclusion} concludes the present paper with a short summary.

Throughout the present paper, we denote the set of integer numbers by $\Z$ and the ring of integers modulo $n$ by $\Z_n:= \{0,1,...,n-1\}$.
We distinguish between the normal equivalence in congruence modulo $n$ as $a \equiv b\pmod{n}$ and the binary operation modulo $n$ denoted by $\bmod{n}$ which returns the corresponding value in $\Z_n$ for $\bmod{n}$ and in $\T$ for $\bmod{1}$.

\section{The numerical method}\label{sec:numerical-method}

In this section, we describe necessary ingredients of our method. For the conciseness, we restrict ourselves to the rank-$1$ lattice points instead of general rank-$r$ lattice points. However, our method is indeed possible to generalize to rank-$r$ lattice points, similar as in \cite{2018arXiv180806357S}.

We use a rank-$1$ lattice point set and an associated anti-aliasing set for the Fourier pseudo-spectral method. For using the Fourier pseudo-spectral method, one obvious choice is regular grids \cite{MR2399406}, but the number of points increases too quickly in terms of the number of dimensions. To mitigate this problem, Gradinaru \cite{MR2377257,MR2308834} proposed to use sparse grids.
For the same reason we introduced lattice points in \cite{2018arXiv180806357S} to get first and second order time convergence, and obtained much better results compared to \cite{MR2377257,MR2308834}.

\subsection{Rank-$1$ lattice point sets and the associated anti-aliasing sets}

 A rank-$1$ lattice point set $\Lambda(\bsz, n)$ is fully determined by the modulus $n$ and a generating vector $\bsz \in \Z_n^d$:
\[
  \Lambda(\bsz, n)
  :=
  \left\{ \frac{\bsz k}{n} \bmod 1 \mathrel{:} k \in \Z \right\}
  .
\]
Usually, all components of the generating vector are chosen to be relatively prime to $n$ which means all points have different values in each coordinate and the number of points is exactly $n$.
The generating vector determines the \emph{quality} of the rank-$1$ lattice points. 
Of course, the quality criterion needs to take into account what the lattice points will be used for. A well studied setting is numerical integration, e.g., \cite{MR1442955,Nuy2014} and \cite[Chapter~5]{MR1172997}. 
Function approximation using lattice points is relatively new. In that context, we refer to \cite{MR2400322,KWW09,MR3671595}. 
We call $\calA (\bsz,n) \subset \Z^d$ an anti-aliasing set for the lattice point set $\Lambda(\bsz,n)$ if
\[
  \bsz \cdot \bsh
  \not\equiv
  \bsz \cdot \bsh'
  \pmod{n}
  \quad
  \text{for all } \bsh, \bsh' \in \calA(\bsz, n), \; \; \bsh \ne \bsh'
  .
\]%
We remark that the anti-aliasing set is not uniquely determined and the cardinality $| \calA (\bsz,n) | \leq n$. By using the \emph{dual lattice} $\Lambda^{\bot}(\bsz,n):=\{\bsh\in \Z^d : \bsz\cdot\bsh \equiv 0 \pmod{n}\}$, we can rewrite the condition as $\bsh-\bsh'\not\in \Lambda^{\bot}(\bsz,n)$ for $\bsh \ne \bsh'$. 
If we have the full cardinality $| \calA (\bsz,n) | = n$, we can divide $\Z^d$ into conjugacy classes:
\newlength\sumd
\settowidth{\sumd}{$\scriptstyle \bsh \in \Lambda^\perp(\bsz,n)$}
\begin{align}
\begin{split}
  \Z^d
  &=
  \biguplus_{\makebox[\sumd]{$\scriptstyle \bsh \in \Lambda^\perp(\bsz,n)$}} \left( \bsh + \calA(\bsz,n) \right)
  \\
  &=
  \biguplus_{\makebox[\sumd]{$\scriptstyle \bsh \in \calA(\bsz,n)$}} \{ \bsh' \in \Z^d : \bsz \cdot \bsh' \equiv \bsz \cdot \bsh \pmod{n} \}
  \\
  &=
  \biguplus_{\makebox[\sumd]{$\scriptstyle j\in\Z_{n}$}} \{ \bsh \in \Z^d : \bsz \cdot \bsh \equiv j \pmod{n} \}
  ,
  \end{split}
  \label{eq:disjunct}
\end{align}
where $\uplus$ is the union of conjugacy classes.

\subsection{Korobov spaces}

Rank-$1$ lattices are closely related to \emph{Korobov spaces} which are reproducing kernel Hilbert spaces of Fourier series.
The Korobov space $E_\alpha(\T^d)$ is given by
 \[
  E_\alpha(\T^d)
  := 
  \left\{ f \in L_2(\T^d) \mathrel{:} 
    \|f\|^2_{E_\alpha(\T^d)}
    :=
    \sum_{\bsh\in \Z^d} |\widehat{f}(\bsh)|^2 \, r^2_{\alpha}(\bsh) 
    <
    \infty
  \right\},
\]
where
\begin{align}\label{eq:ralpha}
  r^2_{\alpha}(\bsh)
  :=
  \prod_{j=1}^d \max(|h_j|^{2\alpha},1)
  .
\end{align}
The parameter $\alpha \ge 1/2$ is called the smoothness parameter which determines the rate of decay of the Fourier coefficients.
To ensure regularity of the solution of the TDSE~\eqref{eq:tdse} and to prove that our method gives higher-order convergence for the temporal discretization, we will assume that the initial condition $g(\bsx)$ and the potential function $v(\bsx)$ are in the Korobov space with given smoothness, see Lemma~\ref{lem:u-series} and Theorem~\ref{thm:pth-commu}.

\subsection{Fourier pseudo-spectral methods using rank-$1$ lattices}

We approximate the solution of the TDSE~\eqref{eq:tdse} by the truncated Fourier series. To ensure the solution to be regular enough so that the Fourier expansion makes sense (e.g., uniqueness, continuity, point-wise convergence), 
we require all functions to be in \emph{Wiener algebra} $A(\T^d)$:
\begin{equation*}
  A(\T^d)
  :=
  \{f\in L_2(\T^d) : \;
  \|f\|_{A(\T^d)}
  :=
  \sum_{\bsh\in \Z^d} |\widehat{f}(\bsh)| < \infty
  \}
  .
\end{equation*}
For $\alpha > 1/2 $, we have $E_\alpha(\T^d) \subset A(\T^d)$. 
The following lemma shows the regularity of the solution, and the TDSE~\eqref{eq:tdse} in terms of Fourier coefficients and was already stated and proven in \cite{2018arXiv180806357S}.
\begin{lemma}[Regularity of solution and Fourier expansion] \label{lem:u-series}
Given the TDSE~\eqref{eq:tdse} with $v,g \in E_\alpha(\T^d)$ and $\alpha \ge 2$, then the solution $u(\bsx,t) \in E_\alpha(\T^d)$ for all finite $t \ge 0$ and therefore
\begin{align}\label{eq:u-series}
  u(\bsx,t)
  =
  \sum_{\bsh \in \Z^d} \widehat{u}(\bsh,t) \, \exp(\twopii \bsh \cdot \bsx)
  ,
\end{align}
with
\begin{equation}\label{eq:Fourier-ODE}
  \imagunit\,\tsc\, \widehat{u}' (\bsh,t)
  =
  2\pi^2\tsc^2 \;\|\bsh\|^2_2 \; \widehat{u} (\bsh,t) + \widehat{f}(\bsh,t),
\end{equation}
for all $\bsh\in\Z^d$, with $\widehat{u}'(\bsh,t) = (\partial/\partial t)\,\widehat{u}(\bsh, t)$ and $\widehat{f}(\bsh,t)$ the Fourier coefficients of $f(\bsx,t) := u(\bsx,t) \, v(\bsx)$.
\end{lemma}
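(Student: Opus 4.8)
The plan is to realise $u$ as the unique mild solution of \eqref{eq:tdse} in $C([0,\infty);E_\alpha(\T^d))$ and then read off \eqref{eq:u-series}--\eqref{eq:Fourier-ODE}. Write $H_0:=-(\tsc^2/2)\Delta$; since $\partial^2/\partial x_j^2$ multiplies the Fourier mode $\exp(\twopii\bsh\cdot\bsx)$ by $-4\pi^2 h_j^2$, the free propagator $U_0(t):=\exp(-\imagunit t H_0/\tsc)$ multiplies $\widehat u(\bsh,t)$ by the unimodular factor $\exp(-2\pi^2\imagunit\tsc\|\bsh\|_2^2\,t)$. Because the Korobov weights $r_\alpha(\bsh)$ in \eqref{eq:ralpha} depend only on the moduli $|h_j|$, $U_0(t)$ is an isometry on $E_\alpha(\T^d)$ and on $A(\T^d)$. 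The Duhamel (variation-of-constants) form of \eqref{eq:tdse},
\[
  u(\cdot,t)
  =
  U_0(t)\,g
  -
  \frac{\imagunit}{\tsc}\int_0^t U_0(t-s)\,\bigl(v\cdot u(\cdot,s)\bigr)\rd s
  ,
\]
is, coefficient by coefficient in $\bsh$, exactly the integrated form of \eqref{eq:Fourier-ODE}, so it suffices to work with this fixed-point equation.

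The second ingredient is that $E_\alpha(\T^d)$ is a Banach algebra for $\alpha>1/2$: the product weights satisfy the quasi-subadditivity $r_\alpha(\bsh)\le 2^{\alpha d}r_\alpha(\bsk)\,r_\alpha(\bsh-\bsk)$, coming from $\max(|h|^\alpha,1)\le 2^\alpha\max(|k|^\alpha,1)\max(|h-k|^\alpha,1)$ in each coordinate, and combining this with the convolution theorem for Fourier coefficients (splitting the convolution sum coordinatewise according to which of $|k_j|$, $|h_j-k_j|$ dominates) gives a multiplicative estimate $\|v\,w\|_{E_\alpha(\T^d)}\le C_{\alpha,d}\,\|v\|_{E_\alpha(\T^d)}\,\|w\|_{E_\alpha(\T^d)}$; the constant $C_{\alpha,d}$ is where the dimension enters, and tracking it is what underlies the later dimension-robust claims. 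Feeding the algebra estimate and the isometry of $U_0$ into the Duhamel map shows it is a contraction on $C([0,T_0];E_\alpha(\T^d))$ for $T_0$ small enough, giving a unique local solution; taking $E_\alpha(\T^d)$-norms in the Duhamel identity gives
\[
  \|u(\cdot,t)\|_{E_\alpha(\T^d)}
  \le
  \|g\|_{E_\alpha(\T^d)}
  +
  \frac{C_{\alpha,d}\,\|v\|_{E_\alpha(\T^d)}}{\tsc}\int_0^t \|u(\cdot,s)\|_{E_\alpha(\T^d)}\rd s
  ,
\]
so Gronwall yields $\|u(\cdot,t)\|_{E_\alpha(\T^d)}\le\|g\|_{E_\alpha(\T^d)}\exp\!\bigl(C_{\alpha,d}\|v\|_{E_\alpha(\T^d)}\,t/\tsc\bigr)<\infty$ for every finite $t$. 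Hence the local solution never blows up in $E_\alpha(\T^d)$, extends to all $t\ge0$, and $u(\cdot,t)\in E_\alpha(\T^d)$ as claimed.

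It then remains to extract the two displayed statements. Since $\alpha\ge 2>1/2$ we have $E_\alpha(\T^d)\subset A(\T^d)$, so for each $t$ the series in \eqref{eq:u-series} converges absolutely and uniformly to $u(\cdot,t)$. Moreover, for $\alpha\ge 2$ each factor of $r_\alpha^2(\bsh)$ is at least $1$ and at least one of them is at least $h_j^4$, whence $r_\alpha^2(\bsh)\ge\max_j h_j^4$ and $\|\bsh\|_2^4\le d^2\,r_\alpha^2(\bsh)$; therefore $\sum_{\bsh\in\Z^d}\|\bsh\|_2^4\,|\widehat u(\bsh,t)|^2\le d^2\|u(\cdot,t)\|_{E_\alpha(\T^d)}^2<\infty$, i.e.\ $\Delta u(\cdot,t)\in L_2(\T^d)$, and $f(\cdot,t)=u(\cdot,t)\,v\in E_\alpha(\T^d)\subset L_2(\T^d)$ by the algebra property. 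Projecting \eqref{eq:tdse} onto $\exp(\twopii\bsh\cdot\bsx)$ and using $\widehat{\Delta u}(\bsh)=-4\pi^2\|\bsh\|_2^2\,\widehat u(\bsh)$ then gives \eqref{eq:Fourier-ODE}, where the existence of $\widehat u'(\bsh,t)$ follows by differentiating the coefficientwise Duhamel formula, which is $C^1$ in $t$ because $s\mapsto\widehat f(\bsh,s)$ is continuous.

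I expect the essential difficulty to be the global-in-time Korobov estimate: establishing the multiplicative inequality for the mixed-smoothness space $E_\alpha(\T^d)$ with a usable constant and closing the Gronwall loop, since this is precisely what controls whether, and how, the subsequent higher-order time-stepping bounds can be taken independent of $d$. By comparison, local existence and uniqueness and the passage to the Fourier-coefficient ODE are routine, the only point needing care being the threshold $\alpha\ge 2$, which guarantees $\Delta u(\cdot,t)\in L_2(\T^d)$ and hence that \eqref{eq:Fourier-ODE} holds in the classical rather than merely distributional sense.
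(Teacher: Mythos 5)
Your plan is sound, and it is worth noting that the paper itself offers no argument for this lemma: it simply defers to the earlier work \cite{2018arXiv180806357S}, where the regularity statement is proved by essentially the same ingredients you list (persistence of Korobov regularity under multiplication by $v$, the unitary/isometric action of the free propagator on the Fourier side, and a variation-of-constants argument), so your proposal is in effect a self-contained reconstruction of the cited proof rather than a different route. Two remarks on the details. First, the crucial step is exactly where you place it: the quasi-submultiplicativity $r_\alpha(\bsh)\le 2^{\alpha d}r_\alpha(\bsk)\,r_\alpha(\bsh-\bsk)$ by itself does \emph{not} give the algebra bound, since it only reduces matters to an $\ell_2\ast\ell_2$ convolution, which lands in $\ell_\infty$; the coordinatewise dominance splitting you mention, combined with the summability $\sum_{h\in\Z}\max(|h|,1)^{-2\alpha}=1+2\zeta(2\alpha)<\infty$ for $\alpha>1/2$ (so that the non-dominant factor is controlled in $\ell_1$ via Cauchy--Schwarz), is what closes the estimate, and it is also the source of the $d$-dependent constant $C_{\alpha,d}$. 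Make sure the write-up uses both ingredients and not just the submultiplicativity. Second, since the equation is linear and multiplication by $v$ is then a bounded operator on $E_\alpha(\T^d)$, you could shortcut the contraction-plus-continuation argument by bounded-perturbation theory for the $C_0$ isometry group $U_0(t)$ on $E_\alpha(\T^d)$, which yields global existence directly; and one should add a sentence identifying your $E_\alpha$-mild solution with \emph{the} solution of \eqref{eq:tdse} (e.g.\ it is also the unique $L_2$-mild solution furnished by Stone's theorem, $v$ being bounded because $E_\alpha(\T^d)\subset A(\T^d)$). Your use of $\alpha\ge 2$ to get $\Delta u(\cdot,t)\in L_2(\T^d)$ and hence \eqref{eq:Fourier-ODE} classically is correct.
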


We then truncate the Fourier series~\eqref{eq:u-series} to a finite sum on an anti-aliasing set $\calA(\bsz,n)$ associated to a rank-$1$ lattice $\Lambda(\bsz,n)$ to get the approximation
\begin{align}\label{approx_u}
  u_a(\bsx,t)
  :=
  \sum_{\bsh\in \calA({\bsz,n})} \widehat{u}_a(\bsh,t) \, \exp(\twopii\bsh\cdot\bsx)
  ,
\end{align}
with the approximated coefficients calculated by the rank-$1$ lattice rule
\begin{align}\label{approx_uhat}
  \widehat{u}_a(\bsh,t)
  :=
  {\frac1n}
  \sum_{\bsp\in \Lambda({\bsz,n})}
    u(\bsp,t) \, \exp(-\twopii\bsh\cdot\bsp)
  .
\end{align}
The subscript $a$ of $u_a(\bsx,t)$ and $\widehat{u}_a(\bsh,t)$ indicates that these are approximations of $u(\bsx,t)$ and $\widehat{u}(\bsh,t)$ respectively.
For simplicity of notation, we omit the time $t$ in the rest of this section.
Due to the rank-$1$ lattice structure and by choosing the anti-aliasing set to be of full size, we have the following properties:
\begin{theorem}\label{thm:rank-1-properties}
Given a rank-$1$ lattice point set $\Lambda(\bsz,n)$ and a corresponding anti-aliasing set $\calA(\bsz,n)$ with $|\calA(\bsz,n)| = n$, the following properties hold.
\\[2mm]
(\textit{i}) (Character property and dual character property)
For any two vectors $\bsh,\bsh' \in \calA(\bsz,n)$
\begin{align}
  \frac{1}{n}
  \sum_{\bsp\in\Lambda(\bsz,n)} \exp(\twopii (\bsh-\bsh')\cdot\bsp)
  =
  \delta_{\bsh,\bsh'},
  \end{align}
  where $\delta_{\bsp,\bsp'}$ is the Kronecker delta function that is $1$ if $\bsp=\bsp'$ and $0$ otherwise.
Also, for any two lattice points $\bsp,\bsp' \in \Lambda(\bsz,n)$
\begin{align}
  \frac{1}{n}
  \sum_{\bsh\in\calA(\bsz,n)} \exp(\twopii \bsh\cdot(\bsp-\bsp'))
  =
  \delta_{\bsp,\bsp'}.
\end{align}
\\[2mm]
(\textit{ii}) (Interpolation condition)
If $u_a$ is the approximation of a function $u \in A(\T^d)$ by truncating its Fourier series expansion to the anti-aliasing set $\calA(\bsz,n)$ and by calculating the coefficients by the rank-$r$ lattice rule, cfr.~\eqref{approx_u} and~\eqref{approx_uhat}, then for any $\bsp \in \Lambda(\bsz,n)$
\begin{equation}\label{interpolation}
  u_a(\bsp)
  =
  u(\bsp)
  .
\end{equation}
\\[2mm]
(\textit{iii}) (Mapping through FFT)
Define the following vectors:
\begin{align*}
  \bsu
  &:=
  \big( u(\bsp_{k}) \big)_{k=0,\ldots,n-1}
  ,
  \\
  \bsu_a
  &:=
  \big( u_a(\bsp_{k}) \big)_{k=0,\ldots,n-1}
  ,
  \\
  \widehat{\bsu}_a
  &:=
  \big( \widehat{u}_a(\bsh_{\xi}) \big)_{\xi=0,\ldots,n-1}
  ,
\end{align*}
with $\bsp_k = \bsz k / n \bmod{1} \in \Lambda(\bsz,n)$, and where $\bsh_{\xi} \in \calA(\bsz,n)$ is chosen such that $ \bsh\cdot\bsz_\xi \equiv \xi \pmod{n}$.
Then $\bsu = \bsu_a$ (by (ii)) is the collection of function values $u(\bsp)$ on the lattice points $\bsp \in \Lambda(\bsz,n)$ and $\widehat{\bsu}_a$ is the collection of Fourier coefficients $\widehat{u}_a(\bsh)$ (by using the lattice rule, cfr.~\eqref{approx_u} and~\eqref{approx_uhat}) on the anti-aliasing indices $\bsh \in \calA(\bsz,n)$.
The $1$-dimensional discrete Fourier transform and its inverse now maps $\bsu_a \in \C^{n}$ to $\widehat{\bsu}_a \in \C^{n}$ and back.
\\[2mm]
(\textit{iv}) (Aliasing)
  The approximated Fourier coefficients~\eqref{approx_uhat} through the lattice rule $\Lambda(\bsz,n)$ alias the true Fourier coefficients in the following way
\begin{align*}
    \widehat{u}_a(\bsh)
    =
    \sum_{\bsh' \in \Lambda^{\bot}(\bsz,n)} \widehat{u}(\bsh+\bsh')
    =
    \widehat{u}(\bsh) + \sum_{\bszero \ne \bsh' \in \Lambda^{\bot}(\bsz,n)} \widehat{u}(\bsh+\bsh')
    .
\end{align*}
\end{theorem}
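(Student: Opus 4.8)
The plan is to reduce every sum over the lattice $\Lambda(\bsz,n)$ to a one-dimensional geometric sum over the parameter $k\in\Z_n$ of the points $\bsp_k=\bsz k/n \bmod 1$. The elementary fact I would record first is that for any $\bsh\in\Z^d$ one has $\bsh\cdot(\bsz k/n\bmod 1)\equiv(\bsh\cdot\bsz)\,k/n\pmod 1$, since reducing each coordinate of $\bsz k/n$ modulo $1$ and then pairing with an integer vector changes the value only by an integer; hence $\exp(\twopii\bsh\cdot\bsp_k)=\exp(\twopii(\bsh\cdot\bsz)k/n)$ and each lattice sum collapses to $\tfrac1n\sum_{k=0}^{n-1}\exp(\twopii\,mk/n)$ for a suitable integer $m$, which is $1$ when $n\mid m$ and $0$ otherwise. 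For the character property in~(\textit{i}) take $m=(\bsh-\bsh')\cdot\bsz$; the defining condition of the anti-aliasing set gives $n\nmid m$ whenever $\bsh\ne\bsh'$ in $\calA(\bsz,n)$, so the sum equals $\delta_{\bsh,\bsh'}$. For the dual character property I would first use $|\calA(\bsz,n)|=n$ together with the anti-aliasing condition to see that $\bsh\mapsto\bsz\cdot\bsh\bmod n$ is a bijection $\calA(\bsz,n)\to\Z_n$ (equivalently $\gcd(z_1,\dots,z_d,n)=1$, which also forces the $n$ lattice points to be distinct); writing $\bsp=\bsz k/n$, $\bsp'=\bsz k'/n$ the sum becomes $\tfrac1n\sum_{j\in\Z_n}\exp(\twopii\,j(k-k')/n)=\delta_{\bsp,\bsp'}$.

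Part~(\textit{ii}) is then a one-line substitution: insert the definition~\eqref{approx_uhat} of $\widehat{u}_a(\bsh)$ into~\eqref{approx_u}, interchange the two finite sums, and apply the dual character property of~(\textit{i}) to the inner sum $\tfrac1n\sum_{\bsh\in\calA(\bsz,n)}\exp(\twopii\bsh\cdot(\bsp-\bsp'))$, which collapses it to the single term $\bsp'=\bsp$, giving $u_a(\bsp)=u(\bsp)$. Part~(\textit{iii}) is mostly unwinding notation: with $\bsh_\xi$ chosen so that $\bsz\cdot\bsh_\xi\equiv\xi\pmod n$, the first observation gives $\exp(\mp\twopii\bsh_\xi\cdot\bsp_k)=\exp(\mp\twopii\,\xi k/n)$, so~\eqref{approx_uhat} reads $\widehat{u}_a(\bsh_\xi)=\tfrac1n\sum_{k=0}^{n-1}u(\bsp_k)\exp(-\twopii\,\xi k/n)$ and~\eqref{approx_u} evaluated at $\bsp_k$ reads $u_a(\bsp_k)=\sum_{\xi=0}^{n-1}\widehat{u}_a(\bsh_\xi)\exp(\twopii\,\xi k/n)$; these are exactly the $1$-dimensional DFT and its inverse on $\C^n$, and that they are mutually inverse is the $d=1$ case of~(\textit{i}) (or, combined with~(\textit{ii}), just the statement that interpolation holds).

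For the aliasing formula in~(\textit{iv}) I would expand $u(\bsp)=\sum_{\bsm\in\Z^d}\widehat{u}(\bsm)\exp(\twopii\bsm\cdot\bsp)$ inside~\eqref{approx_uhat}; since $u\in A(\T^d)$ this Fourier series converges absolutely, so interchanging the sum over $\bsm$ with the finite sum over $\bsp\in\Lambda(\bsz,n)$ is justified by Fubini, yielding $\widehat{u}_a(\bsh)=\sum_{\bsm\in\Z^d}\widehat{u}(\bsm)\,\tfrac1n\sum_{\bsp\in\Lambda(\bsz,n)}\exp(\twopii(\bsm-\bsh)\cdot\bsp)$. By the collapsed geometric sum the inner factor is $1$ exactly when $(\bsm-\bsh)\cdot\bsz\equiv 0\pmod n$, i.e.\ $\bsm-\bsh\in\Lambda^{\bot}(\bsz,n)$, and $0$ otherwise; reindexing $\bsm=\bsh+\bsh'$ with $\bsh'\in\Lambda^{\bot}(\bsz,n)$ gives the first form, and isolating the term $\bsh'=\bszero$ gives the second.

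All four parts are routine computations built on the single one-dimensional geometric-sum identity. The only genuine point of care is the interchange of summation in~(\textit{iv}), which is precisely why membership in the Wiener algebra $A(\T^d)$ is assumed there; the remaining subtlety is purely bookkeeping, namely consistently exploiting the full-cardinality hypothesis $|\calA(\bsz,n)|=n$ to identify $\calA(\bsz,n)$ with $\Z_n$ via $\bsh\mapsto\bsz\cdot\bsh\bmod n$, which underlies the dual character property and hence~(\textit{ii}) and~(\textit{iii}).
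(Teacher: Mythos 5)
Your proposal is correct. Note that the paper itself does not prove Theorem~\ref{thm:rank-1-properties}; it simply cites the more general rank-$r$ statements in \cite{2018arXiv180806357S}, so your write-up supplies a self-contained argument where the paper defers. The route you take is the standard one and is essentially the specialization to rank-$1$ of the cited proof: reduce every lattice sum to the one-dimensional geometric sum $\tfrac1n\sum_{k=0}^{n-1}\exp(\twopii mk/n)$, and deduce (\textit{ii})--(\textit{iv}) from the resulting (dual) character identities. You also handle the two points that actually require care: that full cardinality $|\calA(\bsz,n)|=n$ makes $\bsh\mapsto\bsz\cdot\bsh\bmod n$ a bijection onto $\Z_n$ (equivalently $\gcd(z_1,\dots,z_d,n)=1$, which simultaneously guarantees the $n$ lattice points are distinct, so that $\delta_{k,k'}=\delta_{\bsp,\bsp'}$), and that the interchange of the infinite Fourier sum with the finite lattice sum in (\textit{iv}) is justified by $u\in A(\T^d)$ (absolute convergence, which also makes the pointwise Fourier representation of $u$ legitimate). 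Nothing further is needed.
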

\begin{proof}
We refer to \cite[Theorem~2 and Lemma~3]{2018arXiv180806357S} where more general statement for rank-$r$ lattices can be found.
\end{proof}
We remark that the above theorem can also be understood in terms of Fourier analysis on a finite Abelian group where the group, normally denoted as $G$, is the rank-$1$ lattice point set $\Lambda(\bsz,n)$ and the associated character group (Pontryagin dual) $\widehat{G}:=\{\exp(\twopii \bsh \cdot \circ) : \bsh\in\calA(\bsz,n) \}$ with $|\calA(\bsz,n)|=n$. The (dual) character property is then to be understood as orthonormality of $\widehat{G}$ on $L_2 (G)$. The interpolation condition can be seen as the representability of functions by using Fourier series. Due to this structure, the Plancherel theorem also holds:
\[
\sum_{\bsp\in \Lambda(\bsz,n)}  f(\bsp) \, \overline{g(\bsp)}
=
\sum_{\bsh\in \calA(\bsz,n)}  \widehat{f_a}(\bsh) \, \overline{\widehat{g_a}(\bsh)}
\]
for $f, g \in L_2(G)$.

For readers who are not familiar with Fourier transforms on a rank-$1$ lattice, one intuitive way of seeing why one-dimensional FFTs are available is the following. The usual one-dimensional Fourier transform for equidistant points which is a scalar multiple of a unitary Fourier transform, for a function $f:\T \to \C$, can be written as
\[
\widehat{f}(h)= \frac{1}{n} \sum_{k=0}^{n-1}  f(k/n) \exp(-\twopii h k/n ) ,
\]
and the inverse
\[
f(k/n)= \sum_{h=0}^{n-1}  \widehat{f}(h) \exp(\twopii h k/n ) .
\]
Now we see that the Fourier transform on a rank-$1$ lattice has the exact same structure for a function $f:\T^d \to \C$, 
\[
\widehat{f}(\bsh_\xi)= \frac{1}{n} \sum_{k=0}^{n-1}  f(\bsp_k) \exp(-\twopii \bsh_\xi \cdot \bsp_k )= \frac{1}{n} \sum_{k=0}^{n-1}  f(\bsp_k) \exp(-\twopii \xi k/n ),
\]
and
\[
f(\bsp_k)= \sum_{\xi=0}^{n-1}  \widehat{f}(\bsh_\xi) \exp(\twopii \bsh_\xi \cdot \bsp_k )=  \sum_{k=0}^{n-1}  \widehat{f}(\bsh_\xi) \exp( \twopii \xi k/n ),
\]
where we note $\bsp_k= \bsz k / n \bmod{1}$ and $ \bsh_\xi\cdot\bsz \equiv \xi \pmod{n}$. Hence we only need one-dimensional FFTs to transform functions on $\T^d$.

\subsection{Higher-order exponential splitting}

For the temporal discretization, we employ a higher-order exponential splitting scheme (also called an exponential propagator), see, e.g., \cite{bandrauk1992higher,MR2399406,MR1059400}.
To describe the higher-order exponential splitting, let us consider the following ordinary differential equation:
\begin{align}\label{eq:ODE}
   y'(t) = (A + B) \, y(t)
  ,
  \qquad y(0) = y_0,
\end{align}
where $A$ and $B$ are differential operators. The solution for the equation~\eqref{eq:ODE} is $y(t)=\rme^{(A+B)t}y_0$. However, often it is not possible to compute this exactly, and one needs to approximate the quantity with cheap computational cost.
When both $\rme^{At}$ and $\rme^{Bt}$ can be computed easily, the higher-order exponential splitting is a powerful tool to approximate the solution $\rme^{(A+B)t}y_0$. The approximated solution for this case is given by:
\begin{align}\label{eq:HOsplitting}
y(t+\Deltat)
\approx
\rme^{b_1 B \, \Deltat} \, \rme^{a_1 A \, \Deltat} \, \cdots \rme^{b_s B \, \Deltat} \rme^{a_s A \, \Deltat}
  \, y(t),
\end{align}
where $a_i$ and $b_i$, $i=1,\ldots,s$, are coefficients determined by the desired order of convergence $p$. In other words, if the splitting~\eqref{eq:HOsplitting} satisfies 
\begin{align}\label{eq:local}
 \|
 \rme^{b_1 B \, \Deltat} \, \rme^{a_1 A \, \Deltat} \, \cdots \rme^{b_s B \, \Deltat} \rme^{a_s A \, \Deltat}
  \, y(t)
  -
  \rme^{(A+B)\Deltat} \, y(t)
  \|_{X}
  \le
  C (\Deltat)^{p+1},
\end{align}
for some normed space $X$, where the constant $C$ is independent of $\Deltat$, then the splitting is said to have $p$-th order. The number of steps $s$ and the coefficients $a_i$, $b_i$ can be determined according to the order $p$, see \cite{MR2221614} for details.
We evolve the time using this discretization from time $0$, i.e., 
\[
y_{k+1}
=
\rme^{b_1 B \, \Deltat} \, \rme^{a_1 A \, \Deltat} \, \cdots \rme^{b_s B \, \Deltat} \rme^{a_s A \, \Deltat}
\, y_k
,
\qquad
y_{\{k=0\}} = y_0.
\]
By summing up the local errors~\eqref{eq:local} of each step $k=1,\ldots,m$, where $t=m\Delta t$, gives the total error: 
\[
\|
y_m - y(t)
\|_{X}
\le
C \;m\Deltat\; (\Deltat)^p
=
C t (\Deltat)^p.
\]
We call this quantity the total error in the $L_2$ sense, and this is the reason why the splitting is called to be of $p$-th order.
The error coming from the exponential splitting can be related to commutators of two operators $A$ and $B$, namely $[A,B]:=AB-BA$, $[A,[A,B]]:=A^2B-2ABA+BA^2$, etc. We introduce the notation for the $p$-th commutator by following \cite{MR2399406}: 
\[
 \ad_{A}^p(B)=[A,\ad_{A}^{p-1}(B)],\; \; \ad_{A}^0(B)=B, 
\]
where $p\ge 1$.
When the $p$-th commutator is bounded, it is known that the $p$-th order exponential splitting gives the desired order, see \cite[Lemma~1 and Theorem~1]{MR2399406}. We also refer to \cite[Theorem~2.1]{MR1799313} for the second-order splitting (namely, Strang splitting) in a more abstract setting. 

\subsection{Higher-order exponential splitting on rank-$1$ lattices}

We apply the higher-order exponential splitting to the space discretized TDSE in this section.
For solving the TDSE~\eqref{eq:tdse} in the dual space with finite number of Fourier basis functions, we will rewrite the problem in vector form.
We let $\widehat{\bsu}_t := \bigl( \widehat{u}_a(\bsh_{0},t),$ $\dots,\widehat{u}
_a(\bsh_{n-1},t) \bigr)$ the approximated solution at time $t$. Throughout   time evolution, we use a fixed anti-aliasing set $\calA(\bsz,n) = \{ \bsh_\xi \mathrel{:} \xi = 0, \ldots, n-1\}$ of full size $| \calA(\bsz,n) | = n$, where we denote $\bsh_\xi \in \calA(\bsz,n)$ as such a vector that $\bsh_\xi \cdot \bsz \equiv \xi \pmod{n}$. 
We obtain the following relation by imposing that~\eqref{eq:Fourier-ODE} holds for all $\bsh\in\calA(\bsz,n)$,
\begin{align}\label{ode}
  \imagunit\,\tsc\, \widehat{\bsu}_t'
  &=
  \frac{1}{2}\tsc^2D_n\widehat{\bsu}_t+W_n\widehat{\bsu}_t,
\end{align}
with the initial condition $\widehat{\bsu}_{0} =\widehat{\bsg}_a:=(\widehat{g}_a(\bsh_{0}),\dots,\widehat{g}_a(\bsh_{n-1}))$,
\begin{align}\label{eq:Dn}
  D_n
  :=
  \diag\left((4\pi^2\|\bsh_\xi\|_2^2)_{\xi=0,\ldots,n-1}\right)
  ,
\end{align}
and the potential multiplication operator $W_n:=F_n V_n F_n^{-1}$ with 
\begin{align}\label{eq:Vn}
  V_n
  :=
  \diag\left( \left( v(\bsp_k) \right)_{k=0,\ldots,n-1} \right)
  ,
\end{align}
where $F_n$ is the unitary Fourier matrix
\[
F_n
=
\left(\frac{1}{\sqrt{n}}
       \exp(-\twopii \xi\xi'/n)
     \right)_{\xi,\xi'=0,\ldots,n-1}.
 \]
The approximation of the multiplication operator, $W_n$, is justified by the following lemma which is taken from \cite{2018arXiv180806357S}.
\begin{lemma}[Multiplication operator on rank-$1$ lattices]
Given a rank-$1$ lattice point set $\Lambda(\bsz, n)$ and corresponding anti-aliasing set $\calA(\bsz,n)$ of full size, a potential function $v \in E_\alpha(\T^d)$ with $\alpha \ge 2$ and a function $u_a \in E_\beta(\T^d)$ with $\beta \ge 2$ with Fourier coefficients only supported on $\calA(\bsz,n)$.
Then the action in the Fourier domain restricted to $\calA(\bsz,n)$ of multiplying with $v$, that is $f_a(\bsx) = v(\bsx) \, u_a(\bsx)$, on the nodes of the rank-$1$ lattice, and with $f_a$ having Fourier coefficients restricted to the set $\calA(\bsz,n)$, can be described by a circulant matrix $W_n \in \C^{n\times n}$ with $W_n = F_n \, V_n \, F_n^{-1}$, with $V_n$ given by~\eqref{eq:Vn} and $F_n$ the unitary Fourier matrix, where the element at position $(\xi,\xi')$ of $W_n$ is given by
\begin{align}\label{eq:Wn-elements}
  w_{\xi,\xi'}
  &
  =
  w_{(\xi-\xi') \bmod{n}}
  =
  \sum_{\substack{\bsh\in\Z^d \\ \bsh\cdot \bsz\equiv \xi-\xi'\tpmod{n}}}\kern-2em \widehat{v}(\bsh)
  .
\end{align}
\end{lemma}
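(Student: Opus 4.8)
The plan is to check, one factor at a time, that $F_n V_n F_n^{-1}$ realizes the composition ``synthesize $u_a$ at the lattice nodes, multiply pointwise by $v$, re-analyze the product by the rank-$1$ lattice rule'', and then to read off the matrix entries and recognize them through the aliasing identity of Theorem~\ref{thm:rank-1-properties}(iv). Throughout, the assumption $|\calA(\bsz,n)| = n$ is what makes $F_n$ an $n\times n$ unitary and lets us invoke parts~(ii)--(iii) of Theorem~\ref{thm:rank-1-properties}.

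I would start from the coefficient vector $\widehat{\bsu}_a = (\widehat{u}_a(\bsh_\xi))_{\xi=0}^{n-1}$ and use the identity $\bsh_\xi \cdot \bsp_k \equiv \xi k/n \pmod{1}$. Since $u_a(\bsp_k) = \sum_{\xi'} \widehat{u}_a(\bsh_{\xi'}) \exp(\twopii \xi' k/n)$, a direct computation gives $(F_n^{-1}\widehat{\bsu}_a)_k = n^{-1/2}\, u_a(\bsp_k)$; then $V_n$ multiplies the $k$-th entry by $v(\bsp_k)$, producing $n^{-1/2}\, f_a(\bsp_k)$ with $f_a := v\,u_a$; and finally $F_n$ applies the lattice rule, the two factors $n^{-1/2}$ combining into the $1/n$ of~\eqref{approx_uhat}, so that $(F_n V_n F_n^{-1}\widehat{\bsu}_a)_\xi = \widehat{f}_a(\bsh_\xi)$. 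This is exactly the claimed action, and it is meaningful because $\calA(\bsz,n)$ is finite, hence $u_a$ is a trigonometric polynomial and $f_a = v\,u_a \in A(\T^d)$ whenever $v \in E_\alpha(\T^d) \subset A(\T^d)$ with $\alpha\ge2$; the hypothesis $u_a\in E_\beta(\T^d)$, $\beta\ge2$, only enters through $u_a$ being such a finite polynomial (it is used elsewhere for regularity of products).

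Next I would compute the entries directly,
\[
  w_{\xi,\xi'}
  = \sum_{k=0}^{n-1} (F_n)_{\xi,k}\, v(\bsp_k)\, (F_n^{-1})_{k,\xi'}
  = \frac1n \sum_{k=0}^{n-1} v(\bsp_k)\, \exp\bigl(-\twopii(\xi-\xi')k/n\bigr),
\]
which depends on $\xi$ and $\xi'$ only through $(\xi-\xi')\bmod n$; hence $W_n$ is circulant with $w_{\xi,\xi'} = w_{(\xi-\xi')\bmod n}$, where $w_j := \frac1n\sum_{k} v(\bsp_k)\exp(-\twopii j k/n)$. Picking any $\bsh^{(j)}\in\calA(\bsz,n)$ with $\bsh^{(j)}\cdot\bsz\equiv j\pmod{n}$ turns this into $w_j = \frac1n\sum_{k} v(\bsp_k)\exp(-\twopii\bsh^{(j)}\cdot\bsp_k) = \widehat{v}_a(\bsh^{(j)})$, i.e.\ the rank-$1$ lattice approximation~\eqref{approx_uhat} of $\widehat{v}(\bsh^{(j)})$. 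Theorem~\ref{thm:rank-1-properties}(iv) then gives $\widehat{v}_a(\bsh^{(j)}) = \sum_{\bsh'\in\Lambda^{\bot}(\bsz,n)}\widehat{v}(\bsh^{(j)}+\bsh')$, and since $\bsh'\in\Lambda^{\bot}(\bsz,n)$ iff $\bsh'\cdot\bsz\equiv0\pmod{n}$, the set $\bsh^{(j)}+\Lambda^{\bot}(\bsz,n)$ is exactly $\{\bsh\in\Z^d : \bsh\cdot\bsz\equiv j\pmod{n}\}$; this yields~\eqref{eq:Wn-elements}.

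The one point that needs care is the absolute convergence of the aliased sums $\sum_{\bsh\cdot\bsz\equiv j}\widehat{v}(\bsh)$, which is guaranteed by $E_\alpha(\T^d)\subset A(\T^d)$ for $\alpha\ge2$ (this is also what legitimizes the rearrangement in Theorem~\ref{thm:rank-1-properties}(iv)); the remainder is bookkeeping of the $n^{-1/2}$ normalization of $F_n$ and of the congruence $\bsh_\xi\cdot\bsp_k\equiv\xi k/n\pmod{1}$. There is no real obstacle here — the content is simply that the finite circulant $W_n$ obtained by conjugating $V_n$ with the DFT is precisely the truncation-to-$\calA(\bsz,n)$ of the multiplication operator, with its symbol given by the aliased Fourier coefficients of $v$.
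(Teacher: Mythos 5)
Your proof is correct. Note that the paper itself does not argue this lemma at all: it simply defers to Lemma~5 of the cited Strang-splitting paper \cite{2018arXiv180806357S}, where the statement is proved in the more general rank-$r$ setting. What you supply is a self-contained direct verification using only material already available in this paper, namely Theorem~\ref{thm:rank-1-properties}: the congruence $\bsh_\xi\cdot\bsp_k\equiv \xi k/n \pmod{1}$ shows $F_n^{-1}$ performs synthesis at the nodes (up to the factor $n^{-1/2}$), $V_n$ is the pointwise multiplication by $v(\bsp_k)$, $F_n$ performs the lattice-rule analysis~\eqref{approx_uhat}, and the entry computation $w_{\xi,\xi'}=\frac1n\sum_k v(\bsp_k)\exp(-\twopii(\xi-\xi')k/n)$ together with the aliasing identity of part~(iv) and the conjugacy-class description of $\{\bsh:\bsh\cdot\bsz\equiv \xi-\xi'\pmod n\}$ yields exactly~\eqref{eq:Wn-elements}; the absolute convergence needed for the rearrangement is indeed covered by $E_\alpha(\T^d)\subset A(\T^d)$. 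Two harmless remarks: your identification of the middle vector's entries as $n^{-1/2}f_a(\bsp_k)$ tacitly uses the interpolation property $f_a(\bsp_k)=v(\bsp_k)u_a(\bsp_k)$ of part~(ii), which is fine since $f=v\,u_a\in A(\T^d)$; and for this particular lemma any $\alpha>1/2$ would already suffice for the embedding into the Wiener algebra, the stronger $\alpha\ge2$ being imposed by the surrounding theory rather than by your argument.
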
%
\begin{proof}
We refer to \cite[Lemma~5]{2018arXiv180806357S}.
\end{proof}

We approximate the solution of the ordinary differential equation~\eqref{ode}
\[
  \widehat{\bsu}_{t}
  =
  \rme^{-\frac{\imagunit}{\tsc} W_n \, t - \frac{\imagunit\tsc}{2} D_n \, t}
  \, \widehat{\bsu}_{0},
\]
by applying the higher-order exponential splitting method~\eqref{eq:HOsplitting}:
\begin{equation}
  \widehat{\bsu}_a^{k+1}
  =
  \rme^{- b_1 \frac{\imagunit}{\tsc} W_n \, \Deltat}
  \, 
  \rme^{- a_1 \frac{\imagunit\tsc}{2} D_n \, \Deltat}
  \, \cdots \rme^{-b_s \frac{\imagunit}{\tsc} W_n \, \Deltat} \rme^{-a_s \frac{\imagunit\tsc}{2} D_n \, \Deltat}
 \widehat{\bsu}_a^k
  \qquad \text{for } k=0,1,\dots,m-1,
\label{eq:HigherTDSE}
\end{equation}
where again the coefficients $a_i, b_i$ are determined according to the desired order of convergence, and 
\[
  \rme^{-\frac{\imagunit}{2} W_n \Deltat} 
  = 
  F_n \diag\left( (\rme^{-\frac{\imagunit}{2} v(\bsp_k) \Deltat})_{k=0,\ldots,n-1} \right) F_n^{-1}.
\]
The approximated solution at the time $t=k\Deltat$ is then obtained by stepping time $\Deltat$ iteratively by~\eqref{eq:HigherTDSE}.
In the following we show the \emph{commutator bounds} which correspond to \cite[Hypothesis~3]{MR2399406} and lead us to the total bound as in \cite[Theorem~1]{MR2399406}. 

\begin{theorem}[$p$-th commutator bound and total error bound]\label{thm:pth-commu}
Given a rank-$1$ lattice with generating vector $\bsz \in \Z^d$ and modulus $n$ and a TDSE with a potential function $v \in E_\alpha(\T^d)$ with $ \alpha > 2p + 1/2$ and an initial condition $g \in E_\beta(\T^d)$ with $\beta \ge 2$.
Let $D = \tfrac{\tsc}2 D_n$ and $W = \frac1\tsc W_n$ with $D_n$ and $W_n = F_n V_n F_n^{-1}$ as defined in~\eqref{eq:Dn} and~\eqref{eq:Wn-elements}, and with $V_n$ as defined in~\eqref{eq:Vn} using the potential function $v$.

If the anti-aliasing set $\calA(\bsz,n) = \{ \bsh_\xi \in \Z^d : \bsh_\xi \cdot \bsz \equiv \xi \pmod{n} \text{ for } \xi = 0,\ldots,n-1 \}$, with full cardinality, is chosen such that it has minimal $\ell_2$ norm, i.e.,
\begin{align}\label{eq:hrepresenter}
  \| \bsh_\xi \|_2
  =
  \min_{\bsh' \in A(\bsz,n,\xi)} \|\bsh'\|_2
  ,
\end{align}
with
\begin{align*}
  A(\bsz,n,\xi)
  :=
  \bigl\{ \bsh \in \Z^d : \bsh \cdot \bsz \equiv \xi \pmod{n} \bigr\}
  ,
\end{align*}
then for all $\bsy \in \R^n$ we have the following bound for the $p$-th commutator:
\begin{align*}
 \| \ad_{D}^p(W) \;\bsy \|_2 \le c \, \|(D+I)^p\,\bsy\|_2,
 \end{align*}
 where $c$ is a constant independent of $n$ and $\bsy$. \\
 This commutator condition and \cite[Theorem~1]{MR2399406} directly give us the total error bound for (\ref{ode}):
 \[
\|\widehat{\bsu}_{t} - \widehat{\bsu}_{a}^{m} \|_2 
\le
C\|\widehat{\bsu}_{0}-\widehat{\bsu}_{a}^{0} \|_2
+
C' (\Deltat)^p \|(D+I)^p \widehat{\bsu}_{0}\|_2, 
 \]
 where $m\Deltat=t$ and the constants depend on $t$ but not on $m$ or $\Deltat$.
\end{theorem}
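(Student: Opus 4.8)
The plan is to reduce the commutator inequality to the single operator-norm estimate $\|\ad_D^p(W)\,(D+I)^{-p}\|_{2\to2}\le c$ and then to establish that estimate by a Schur (row- and column-sum) test. Throughout, write $d_\xi:=2\pi^2\tsc\,\|\bsh_\xi\|_2^2$ for the $\xi$-th diagonal entry of $D$, and recall from the multiplication-operator lemma that $W$ has entries $W_{\xi,\xi'}=\tfrac{1}{\tsc}\sum_{\bsh\cdot\bsz\equiv\xi-\xi'\tpmod{n}}\widehat v(\bsh)$. Since $D$ is diagonal, a one-line induction gives $(\ad_D^p(W))_{\xi,\xi'}=(d_\xi-d_{\xi'})^p\,W_{\xi,\xi'}$. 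As $D+I\succeq I$ is invertible, setting $\bsy=(D+I)^{-p}\bsz$ turns the claim into $\|N\|_{2\to2}\le c$ for the matrix $N:=\ad_D^p(W)\,(D+I)^{-p}$, with entries $N_{\xi,\xi'}=(d_\xi-d_{\xi'})^p\,W_{\xi,\xi'}\,(d_{\xi'}+1)^{-p}$, which I would prove for every $\bsz\in\C^n$ (hence a fortiori for $\bsy\in\R^n$).

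The heart of the argument is a per-index estimate of the scalar $|d_\xi-d_{\xi'}|^p/(d_{\xi'}+1)^p$. Fix $\xi,\xi'$ and \emph{any} $\bsh\in\Z^d$ with $\bsh\cdot\bsz\equiv\xi-\xi'\tpmod{n}$. Then $\bsh_{\xi'}+\bsh\in A(\bsz,n,\xi)$ and $\bsh_\xi-\bsh\in A(\bsz,n,\xi')$, so the minimality~\eqref{eq:hrepresenter} together with the triangle inequality gives $\|\bsh_\xi\|_2\le\|\bsh_{\xi'}\|_2+\|\bsh\|_2$ and $\|\bsh_{\xi'}\|_2\le\|\bsh_\xi\|_2+\|\bsh\|_2$, that is, $\bigl|\,\|\bsh_\xi\|_2-\|\bsh_{\xi'}\|_2\,\bigr|\le\|\bsh\|_2$. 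Factoring the difference of squares and using $\|\bsh_\xi\|_2+\|\bsh_{\xi'}\|_2\le 2\|\bsh_{\xi'}\|_2+\|\bsh\|_2$ yields $|d_\xi-d_{\xi'}|\le 2\pi^2\tsc\,\|\bsh\|_2\bigl(2\|\bsh_{\xi'}\|_2+\|\bsh\|_2\bigr)$; dividing by $d_{\xi'}+1\ge\max\{2\pi^2\tsc\|\bsh_{\xi'}\|_2^2,1\}$ and applying AM--GM to the cross term leads to $|d_\xi-d_{\xi'}|^p/(d_{\xi'}+1)^p\le C_1(p,\tsc)\,\bigl(\|\bsh\|_2^{2p}+1\bigr)$, a bound valid for every admissible $\bsh$, with $C_1$ depending only on $p$ and $\tsc$.

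Inserting this into the convolution sum for $W_{\xi,\xi'}$ gives $|N_{\xi,\xi'}|\le\tfrac{C_1}{\tsc}\sum_{\bsh\cdot\bsz\equiv\xi-\xi'\tpmod{n}}(\|\bsh\|_2^{2p}+1)\,|\widehat v(\bsh)|$. With $\xi$ fixed, the residues $(\xi-\xi')\bmod n$ range over all of $\Z_n$ as $\xi'$ does, so by the partition~\eqref{eq:disjunct} every row sum of $|N|$ is at most $c:=\tfrac{C_1}{\tsc}\sum_{\bsh\in\Z^d}(\|\bsh\|_2^{2p}+1)\,|\widehat v(\bsh)|$; the same bound holds for the column sums, and the Schur test then gives $\|N\|_{2\to2}\le c$. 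Finiteness of $c$ is where the smoothness hypothesis is spent: $\sum_\bsh|\widehat v(\bsh)|<\infty$ since $E_\alpha(\T^d)\subset A(\T^d)$, and by Cauchy--Schwarz with the weight $r^2_\alpha$ of~\eqref{eq:ralpha}, $\sum_\bsh\|\bsh\|_2^{2p}|\widehat v(\bsh)|\le\|v\|_{E_\alpha(\T^d)}\bigl(\sum_\bsh\|\bsh\|_2^{4p}/r^2_\alpha(\bsh)\bigr)^{1/2}$; bounding $\|\bsh\|_2^{4p}\le d^{2p}\sum_{j}|h_j|^{4p}$ and factorising over coordinates reduces the last sum to a finite multiple of $\sum_{k\ge1}k^{4p-2\alpha}$ times convergent one-dimensional series, which converges exactly because $\alpha>2p+\tfrac12$. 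Hence $c$ depends only on $p,\tsc,d,\alpha$ and $\|v\|_{E_\alpha(\T^d)}$, not on $n$ or $\bsy$, and the $p$-th commutator bound is proved. Since this is precisely the $p$-th commutator hypothesis \cite[Hypothesis~3]{MR2399406} with an $n$-independent constant, \cite[Theorem~1]{MR2399406} applies to the finite-dimensional system~\eqref{ode} — the unitary group generated by $-\imagunit(D+W)$ (for a real potential $v$) supplying the required stability — and yields the stated total error bound with constants depending on $t$ but not on $m$, $\Deltat$ or $n$.

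I expect the one real obstacle to be the mismatch between $D_n$, whose diagonal is built from the \emph{minimal}-$\ell_2$-norm representatives $\bsh_\xi$, and $W_n$, whose entries are sums of $\widehat v$ over whole residue classes. The device that resolves it is to refrain from estimating the fixed scalar $(d_\xi-d_{\xi'})^p/(d_{\xi'}+1)^p$ on its own and instead carry it inside the sum over $\bsh$, bounding it separately for each $\bsh$ through~\eqref{eq:hrepresenter}; after that, the Schur test and the Korobov embedding are routine, and the sharp exponent $\alpha>2p+\tfrac12$ emerges directly from the one-dimensional series $\sum_{k\ge1}k^{4p-2\alpha}$.
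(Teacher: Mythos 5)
Your proposal is correct and follows essentially the same route as the paper: reduce the claim to an $n$-independent bound on $\|\ad_D^p(W)(D+I)^{-p}\|_2$, bound that norm via row and column sums (the paper uses $\|M\|_2\le\sqrt{\|M\|_1\|M\|_\infty}$, which is your Schur test), exploit the minimal-$\ell_2$-norm choice of the anti-aliasing representatives to compare $\|\bsh_\xi\|_2$ with $\|\bsh_{\xi'}\|_2$ through a member of the difference class, collapse the sum over residue classes to $\sum_{\bsh\in\Z^d}\|\bsh\|_2^{2p}|\widehat v(\bsh)|$ via~\eqref{eq:disjunct}, and finish with Cauchy--Schwarz against $r_\alpha^2$ under $\alpha>2p+\tfrac12$, then invoke Thalhammer's Theorem~1. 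Your only deviations are cosmetic bookkeeping (carrying the scalar estimate inside the sum per $\bsh$ rather than first using the representative $\bsh_{\xi-\xi'}$, and a coordinate-wise rather than $\ell_\infty$-based bound in the final series), which do not change the argument.
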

\begin{proof}
Let $M:=\ad_{D}^p(W) \; (D+I)^{-p}$. Since $(D+I)^p$ is is non-singular, the claim of the theorem is equivalent to the assertion that
the induced $\ell_2$ norm of the matrix $\| M \|_2:= \sup_{\bszero \ne \bsy \in \R^n} \| M \, \bsy \|_2 / \| \bsy \|_2$ is bounded independent of $n$.
Each element of the matrix $M$ is given by,
\[
 M
 =
 \left(
       \frac{
         (\|\bsh_\xi\|^2_2-\|\bsh_{\xi'}\|^2_2)^p
       }{
         \tsc (\|\bsh_{\xi'}\|^2_2 + c_1)^p
       }
       \, w_{\xi,\xi'}
     \right)_{\xi,\xi'=0,\ldots,n-1}
  ,
\]
where the constant $c_1= 1/(2\pi\tsc)^p > 0$. Now we bound $\| M \|_2$ by using $\| M \|_2 \le \sqrt{\|M\|_1 \|M\|_{\infty}}$.
First we bound $\| M \|_1$: 
\begin{align*}
\|M\|_1
&=
\frac{1}{\tsc}\max_{\xi'\in\Z_n} \sum_{\substack{\xi=0\\\xi\ne\xi'}}^{n-1} \left| \frac{( \|\bsh_\xi\|^2_2-\|\bsh_{\xi'}\|^2_2 )^p}{(\|\bsh_{ \xi'}\|^2_2 + c_1 )^p} \, w_{\xi,\xi'} \right| \\
&\le 
\frac{1}{\tsc}\max_{\xi'\in\Z_n} \sum_{\substack{\xi=0\\\xi\ne\xi'}}^{n-1} \left| \frac{( \max ( \|\bsh_\xi\|^{2p}_2, \|\bsh_{\xi'}\|^{2p}_2)}{(\|\bsh_{ \xi'}\|^2_2 + c_1 )^p} \, w_{\xi,\xi'} \right|.
\end{align*}
We notice that the diagonal components of $M$ ($\xi=\xi'$) is always $0$, hence we exclude such cases in the following argument.
Because we collect the anti-aliasing set by minimizing the $\ell_2$ norm~\eqref{eq:hrepresenter}, we have
$\|\bsh_\xi\|_2 \le \|\bsh_\xi'\|_2$ for any $\bsh_\xi' \in A(\bsz,n,\xi)$.
In particular, this holds for $\bsh_\xi' = \bsh_{\xi-\xi'} + \bsh_{\xi'}$ since $(\bsh_{\xi-\xi'}+\bsh_{\xi'}) \cdot \bsz \equiv \xi \pmod{n}$ for any choice of $\xi'=0,\ldots,n-1$. 
This gives us the connection between $\| \bsh_\xi \|_2$ and $\| \bsh_{\xi'} \|_2$ using $\| \bsh_{\xi-\xi'} \|_2$:
\begin{align*}
\frac{\|\bsh_\xi\|^2_2}{\|\bsh_{ \xi'}\|^2_2 + c_1}
\le
\frac{\|\bsh_{\xi'}+\bsh_{\xi-\xi'}\|^2_2}{\|\bsh_{ \xi'} \|^2_2 + c_1}
\le
4\| \bsh_{\xi-\xi'}\|^2_2,
\end{align*}
for $\xi\ne\xi'$. We continue from the above bound of $\| M \|_1$,
\begin{align*}
\|M\|_1
&\le 
\frac{1}{\tsc}\max_{\xi'\in\Z_n} \sum_{\substack{\xi=0\\\xi\ne\xi'}}^{n-1} \left| \frac{( \max ( \|\bsh_\xi\|^{2p}_2, \|\bsh_{\xi'}\|^{2p}_2)}{(\|\bsh_{ \xi'}\|^2_2 + c_1 )^p} \, w_{\xi,\xi'} \right| \\
&\le
\frac{1}{\tsc}\max_{\xi'\in\Z_n} \sum_{\substack{\xi=0\\\xi\ne\xi'}}^{n-1} \left| \max \left(\frac{\|\bsh_\xi\|^2_2}{\|\bsh_{ \xi'}\|^2_2 +c_1 },1 \right)^p \, w_{\xi,\xi'} \right|
\\
&\le
\frac{1}{\tsc}\max_{\xi'\in\Z_n} \sum_{\substack{\xi=0\\\xi\ne\xi'}}^{n-1} \left| \max \left( 4^p \| \bsh_{\xi-\xi'}\|^{2p}_2,1 \right) \, w_{\xi,\xi'} \right|
\\
&=
\frac{1}{\tsc}\max_{\xi'\in\Z_n} \sum_{\substack{\xi=0\\\xi\ne\xi'}}^{n-1} \left| \left( 4^p \| \bsh_{\xi-\xi'}\|^{2p}_2 \right) \, w_{\xi,\xi'} \right|
\\
&\le
\frac{4^p}{\tsc}\max_{\xi'\in\Z_n} \sum_{\substack{\xi=0\\\xi\ne\xi'}}^{n-1}  \| \bsh_{\xi-\xi'}\|^{2p}_2  \,  \Biggl|
     \sum_{\bsh\in A(\bsz,n,\xi-\xi')} \widehat{v}(\bsh) \Biggr|
\\
&\le
\frac{4^p}{\tsc}\max_{\xi'\in\Z_n} \sum_{\substack{\xi=0\\\xi\ne\xi'}}^{n-1} \sum_{\bsh\in A(\bsz,n,\xi-\xi')}  \| \bsh \|^{2p}_2  \,  | \widehat{v}(\bsh) |
\\
&\le
\frac{4^p}{\tsc}\sum_{\bsh\in \Z^d} \|\bsh\|^{2p}_2 \, | \widehat{v}(\bsh)|.
\end{align*}
For the last inequality, we used the conjugacy decomposition~\eqref{eq:disjunct}. By using Cauchy--Schwarz inequality and multiplying and dividing by $r_\alpha$, we have
\begin{align*}
 \sum_{\bsh\in \Z^d} \|\bsh\|^{2p}_2 \, | \widehat{v}(\bsh)|
 &\le
   \left( 
    \sum_{\bsh\in \Z^d} r^2_\alpha(\bsh) \, |\widehat{v}(\bsh)|^2
  \right)^{1/2}
  \left(
    \sum_{\bsh\in \Z^d} \frac{\|\bsh\|^{4p}_2}{r^2_\alpha(\bsh)}
  \right)^{1/2}
  \\
&\le
  \|v\|_{E_\alpha(\T^d)}
  \left(
     \sum_{\bsh\in \Z^d} \frac{(\sqrt{d} \, \|\bsh\|_\infty)^{4p}}{r^2_\alpha(\bsh)} 
  \right)^{1/2}
  \\
&\le
  \|v\|^2_{E_\alpha(\T^d)}
  \left(
    \sum_{\bsh\in \Z^d} \frac{d^{2p}}{r^2_{\alpha-2p}(\bsh)}  \right )^{1/2} \\
&\le
  \|v\|^2_{E_\alpha(\T^d)}
  \left( d^{2p} \, (1 + 2 \, \zeta(2\alpha-4p))^d \right)^{1/2} 
  <
  \infty
  .
\end{align*}
This means we have bounded $\|M\|_1$ independent of $n$.
For $\|M\|_\infty$ we can proceed in a similar way to obtain
\begin{align*}
  \|M\|_\infty
  &=
  \max_{\xi\in\Z_n} \sum_{\substack{\xi'=0\\\xi'\ne\xi}}^{n-1}
  \left|
  \frac{ \left(\|\bsh_{\xi}\|^2_2 - \|\bsh_{\xi'}\|^2_2\right)^p}
       {\left(\|\bsh_{\xi'}\|^{2}_2 + c_1\right)^p} \, w_{\xi,\xi'}
  \right|
 \\
  &\le
  \frac{4^p}{\tsc} \|v\|^2_{E_\alpha(\T^d)}
  \left( d^{2p} \, (1 + 2 \, \zeta(2\alpha-4p))^d \right)^{1/2} <\infty
  .
\end{align*}
Therefore, we have $\|M\|_2 < \infty$ independent of $n$.
The total error bound directly follows from this commutator bound and \cite[Theorem~1]{MR2399406}.
\end{proof}

\section{Numerical results} \label{sec:numerical}

We demonstrate our method by showing some numerical results in this section. We construct rank-$1$ lattices by using the component-by-component (CBC) construction
\cite{MR2272256,Nuy2014}. The code for producing the rank-$1$ lattice is available online \cite{MR2198499}, \texttt{fastrank1expt.m}. With the script, we choose $n$ being a power of $2$ and generate the vector $\bsz$ which is optimized for integration in (unweighted) Korobov space with first order mixed derivatives, i.e., $\alpha = 1$.
In Table~\ref{tb:parameter} we display the generating vector $\bsz$ and the number of points $n$ for the following numerical results.
Using given $n$ and $\bsz$, we construct the anti-aliasing set in accordance with Theorem~\ref{thm:pth-commu} in the following manner: (i) first we generate all integer vector $\bsh\in\Z^d$ in a bounded region $\|\bsh\|\le R$ for a well chosen $R$; (ii) then we sort the obtained set according to the $\ell_2$ distance in ascending order; (iii) we calculate the value $m_{\bsh}:=\bsh\cdot\bsz \bmod{n}$ in the sorted order and store $\bsh$ in $\calA(\bsz,n)$ if the value $m_{\bsh}$ has not appeared before. We repeat this step (iii) until we have the full cardinality $|\calA(\bsz,n)|=n$.

\begin{table}[]
\centering
\label{tb:parameter}
\begin{tabular}{c|c|c}\hline
 $d$ & $n$  & $\bsz^{\top}$ \\\hline
$2$  & $2^{16} $& $(1,100135)$   \\[2mm]
$4$ & $2^{20}$ & $ (1, 443165,95693,34519)$ \\[2mm]
 $6$  &$2^{24}$ & $(1, 6422017, 7370323, 2765761, 8055041, 2959639)$   \\[2mm]
 $8$ & $2^{24}$ & $(1, 6422017, 7370323, 2765761, 8055041, 2959639, 7161203, 4074015)$ \\[2mm] 
\hline
\end{tabular}
\caption{Parameters of the rank-$1$ lattice points for our numerical results. }
\end{table}

\subsection{Convergence with respect to time step size}

We consider a common numerical setting as it is considered in \cite{MR2377257,MR1799313,MR2399406} where Fourier pseudo-spectral methods are used. We calculate the error with different value of time steps against a reference solution. 
For the initial condition $g(\bsx)$, we choose the \emph{Gaussian wave packet} given by:
\[
  g(\bsx)
  :=
  \left(\frac{2}{\pi \tsc}\right)^{d/4}\exp\left(-\frac{ \sum_{j=1}^{d}\left(2 \pi x_j-\pi\right)^2}{\tsc}\right)\frac{1}{c},
\]
where the constant $c$ is a normalizing constant to make $\|g \|_{L_2}=1$.
For the potential function $v$, we consider a \emph{smooth potential} function
\[
  v_1(\bsx)
  =
  \prod_{j=1}^d (1-\cos(2\pi x_j))
  ,
\]
and a \emph{harmonic potential} function
\[
  v_2(\bsx)
  =
  \frac{1}{2}\sum^d_{j=1}(2\pi x_j-\pi)^2
  .
\]
Our aim is to show the temporal discretization error $\|u_a(\bsx,t)-u_a^m (\bsx)\|_{L_2}$ at fixed time $t=m\,\Deltat=1$, for that sake we calculate a reference solution $u^{M}_a(\bsx)$ with the finest time step size $\Deltat=1/M=1/10000$, as an approximation of $u_a(\bsx,t)$.
We then vary the time step size $\Deltat=1/m=1/5,\ldots,1/1000$ and calculate $u^{m}_a(\bsx)$ to see the convergence plot of $\|u^M_a(\bsx)-u_a^{m}(\bsx)\|_{L_2}$.

\subsection{Sixth-order splitting}

We recall that the higher-order exponential splitting is written as
\[
y_{k+1}
=
\rme^{b_1 B \, \Deltat} \, \rme^{a_1 A \, \Deltat} \, \cdots \rme^{b_s B \, \Deltat} \rme^{a_s A \, \Deltat}
\, y_k
.
\]
For the sixth-order method, we employ the coefficients $a_j$ and $b_j$ from \cite{MR1423077} denoted as ``s9odr6a'' therein. We exhibit the coefficients in Table~\ref{tb:sixth}.
We plot the results for dimension $2$ to $8$ in Fig~\ref{fig:sixth}. The potential $v_1$ is not smooth enough on the boundary of $[0,1]^d$ so it does not satisfy the required condition in the strict sense. The initial condition $g$ and the potential $v_2$ meet all the required conditions.
The expected sixth-order convergence is consistent in every plot. When the error reaches to the machine precision, the plot becomes flat. For the $2$-dimensional case with the potential $v_2$, we see the convergence happening when the time step size is very small. This can be explained by a phenomenon, called instability of exponential splitting;
this is caused by negative coefficients of the exponential splitting $a_j$ and $b_j$, and is discussed in e.g., \cite{MR3695212}. Especially in \cite{MR3695212}, commutator-free quasi-Magnus exponential integrators are proposed to avoid the issue, however, this is out of the scope of the present paper. The instability issue does not happen in a higher-dimensional settings.

\begin{table}[bpt]
\centering
\begin{tabular}{|c|c|c|c|} \hline
 & $a_j$ & & $b_j$\\ \hline
$j=1,9$ & 0.392161444007314 & $j=1,10$ & 0.196080722003657 \\
$j=2,8$ & 0.332599136789359 & $j=2,9$ & 0.362380290398337  \\
$j=3,7$ & -0.706246172557639 & $j=3,8$ & -0.186823517884140 \\
$j=4,6$ & 0.0822135962935508 & $j=4,7$ & -0.312016288132044 \\
$j=5$  & 0.798543990934830 & $j=5,6$ & 0.440378793614190 \\
$j=10$ & 0 &  & \\ \hline 
\end{tabular}
\caption{ Coefficients for the sixth-order method, calculated based on \cite{MR1423077}. }\label{tb:sixth}
\end{table}

\begin{figure}
\centering
\ifpreprint
  \includegraphics{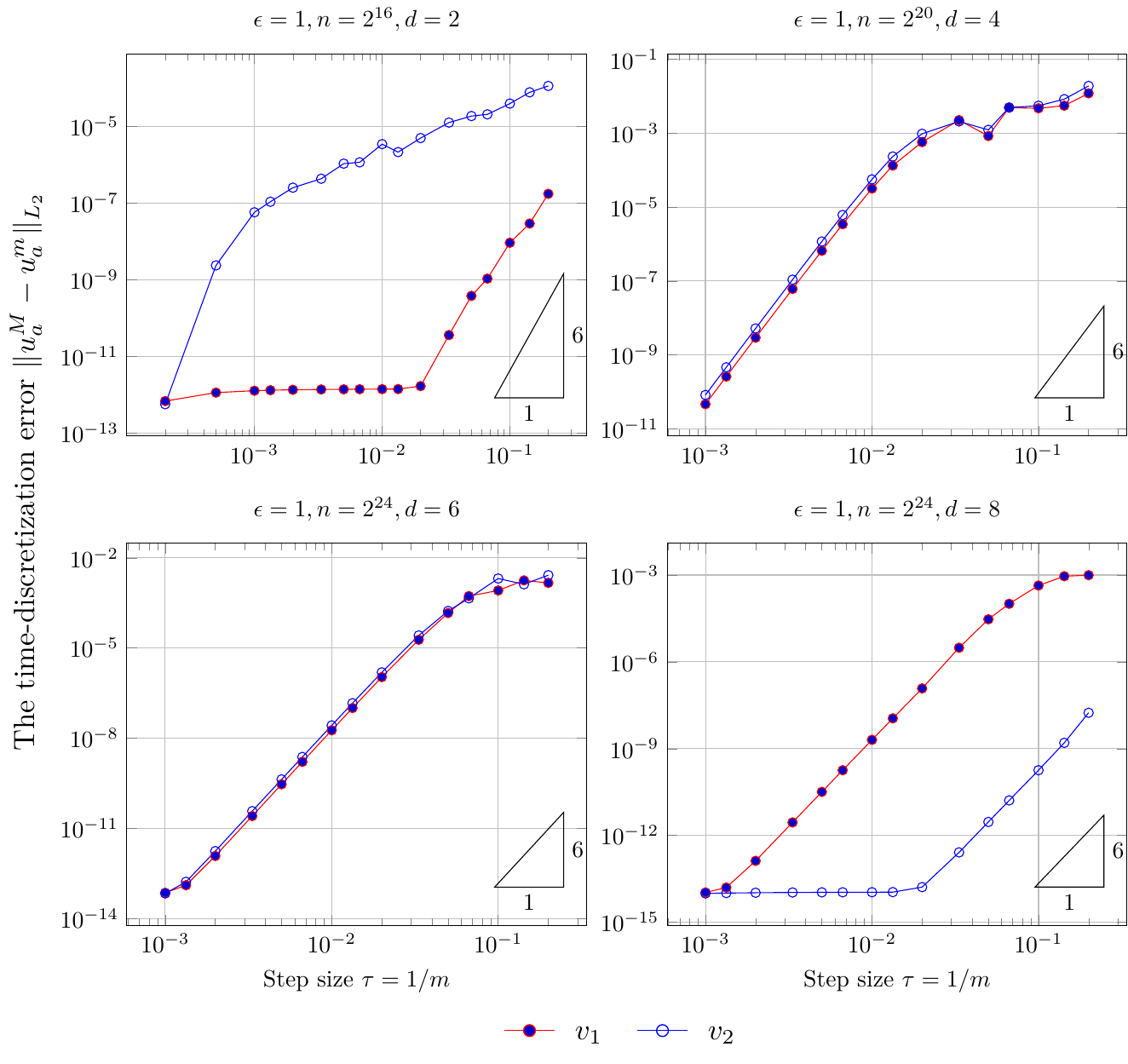}
\else
\begin{tikzpicture}[scale=0.8]
\begin{groupplot}[ group style={group name=myplots, group size=2 by 2,vertical sep=1.6cm,horizontal sep=1.2cm },legend style={font=\fontsize{4}{5}\selectfont}]
\nextgroupplot[title={$\epsilon = 1, n=2^{16},d=2$}, xmajorgrids,
ymajorgrids,       xmode=log,
        ymode=log,ylabel={},xlabel={}]
\pgfplotstableread{6thgs2_d2 epsilon1N65536.dat}{\datatable}
\addplot+[color=red, mark=*] table  {\datatable};
\pgfplotstableread{6thghat2_d2 epsilon1N65536.dat}{\datatable}
\addplot+[color=blue, mark=o] table  {\datatable};
\logLogSlopeTriangle{0.95}{0.15}{0.1}{6}{black};

\nextgroupplot[title={$\epsilon = 1, n=2^{20},d=4$}, xmajorgrids,
ymajorgrids,       xmode=log,
        ymode=log,ylabel={},xlabel={}]
\pgfplotstableread{6thgs2_d4 epsilon1N1048576.dat}{\datatable}
\addplot+[color=red, mark=*] table  {\datatable};
\pgfplotstableread{6thghat2_d4 epsilon1N1048576.dat}{\datatable}
\addplot+[color=blue, mark=o] table  {\datatable};
\logLogSlopeTriangle{0.95}{0.15}{0.1}{6}{black};

\nextgroupplot[title={$\epsilon = 1, n=2^{24},d=6$}, xmajorgrids,
ymajorgrids,       xmode=log,
        ymode=log,ylabel={},xlabel={Step size $\tau=1/m$}]
\pgfplotstableread{6thgs2_d6 epsilon1N16777216.dat}{\datatable}
\addplot+[color=red, mark=*] table  {\datatable};\label{gs1}
\pgfplotstableread{6thghat2_d6 epsilon1N16777216.dat}{\datatable}
\addplot+[color=blue, mark=o] table  {\datatable};\label{gh1}
\logLogSlopeTriangle{0.95}{0.15}{0.1}{6}{black};

\nextgroupplot[ title={$\epsilon = 1, n=2^{24},d=8$},xmajorgrids,
ymajorgrids,       xmode=log,
        ymode=log,xlabel={Step size $\tau=1/m$}]

\pgfplotstableread{6thgs2_d8 epsilon1N16777216.dat}{\datatable}
\addplot+[color=red, mark=*] table  {\datatable};
\pgfplotstableread{6thghat2_d8 epsilon1N16777216.dat}{\datatable}
\addplot+[color=blue, mark=o] table  {\datatable};
\logLogSlopeTriangle{0.95}{0.15}{0.1}{6}{black};

          \end{groupplot}
    \path (myplots c1r1.outer south west)
          -- node[anchor=south,rotate=90] {{The time-discretization error $\|{u_a^M-u_a^m}\|_{L_2}$}}
          (myplots c1r1.outer south west)
    ;
\path (myplots c1r1.south west|-current bounding box.south)--
      coordinate(legendpos)
      (myplots c2r1.south east|-current bounding box.south);
\matrix[
    matrix of nodes,ampersand replacement=\&,
    anchor=north,
    draw=none,
    outer sep=0.5em,
    draw=none
  ]at([below,yshift=1ex]legendpos)
  {
    \ref{gs1}\& $v_1$ \&[5pt]
    \ref{gh1}\& $v_2$ \&[5pt]\\};
\end{tikzpicture}
\fi
\caption{The time-discretization error with the sixth-order method.}
\label{fig:sixth}
 \end{figure}
 
\subsection{Eighth-order splitting}

For the eighth-order method, we employ the coefficients again from \cite{MR1423077} denoted as ``s17odr8a''. The coefficients are shown in Table~\ref{tb:eighth}.
The results are shown in Fig~\ref{fig:eighth} and we again see that the convergence rate is consistently eighth order in each plot. Most of the plot seems to be similar to Fig~\ref{fig:sixth} but with faster convergence, therefore they reach to the machine precision more quickly.

\begin{table}[bpt]
\centering
\begin{tabular}{|c|c|c|c|} \hline
 & $a_j$ & & $b_j$\\ \hline
$j=1,17$ & 0.130202483088890 & $j=1,18$ & 0.0651012415444450\\
$j=2,16$ & 0.561162981775108  & $j=2,17$ & 0.345682732431999\\
$j=3,15$ & -0.389474962644847 & $j=3,16$ & 0.0858440095651306\\
$j=4,14$ & 0.158841906555156 & $j=4,15$ & -0.115316528044846\\
$j=5,13$ & -0.395903894133238  & $j=5,14$  & -0.118530993789041\\
$j=6,12$ & 0.184539640978316  & $j=6,13$  & -0.105682126577461\\
$j=7,11$ & 0.258374387686322  & $j=7,12$  & 0.221457014332319\\
$j=8,10$ & 0.295011723609310  & $j=8,11$  & 0.276693055647816\\
$j=9$ & -0.605508533830035  & $j=9,10$  & -0.155248405110362\\
$j=18$ & 0 &  &  \\ \hline 
\end{tabular}
\caption{ Coefficients for the eighth-order method, calculated based on \cite{MR1423077}. }\label{tb:eighth}
\end{table} 
 
 \begin{figure}
\centering
\ifpreprint
  \includegraphics{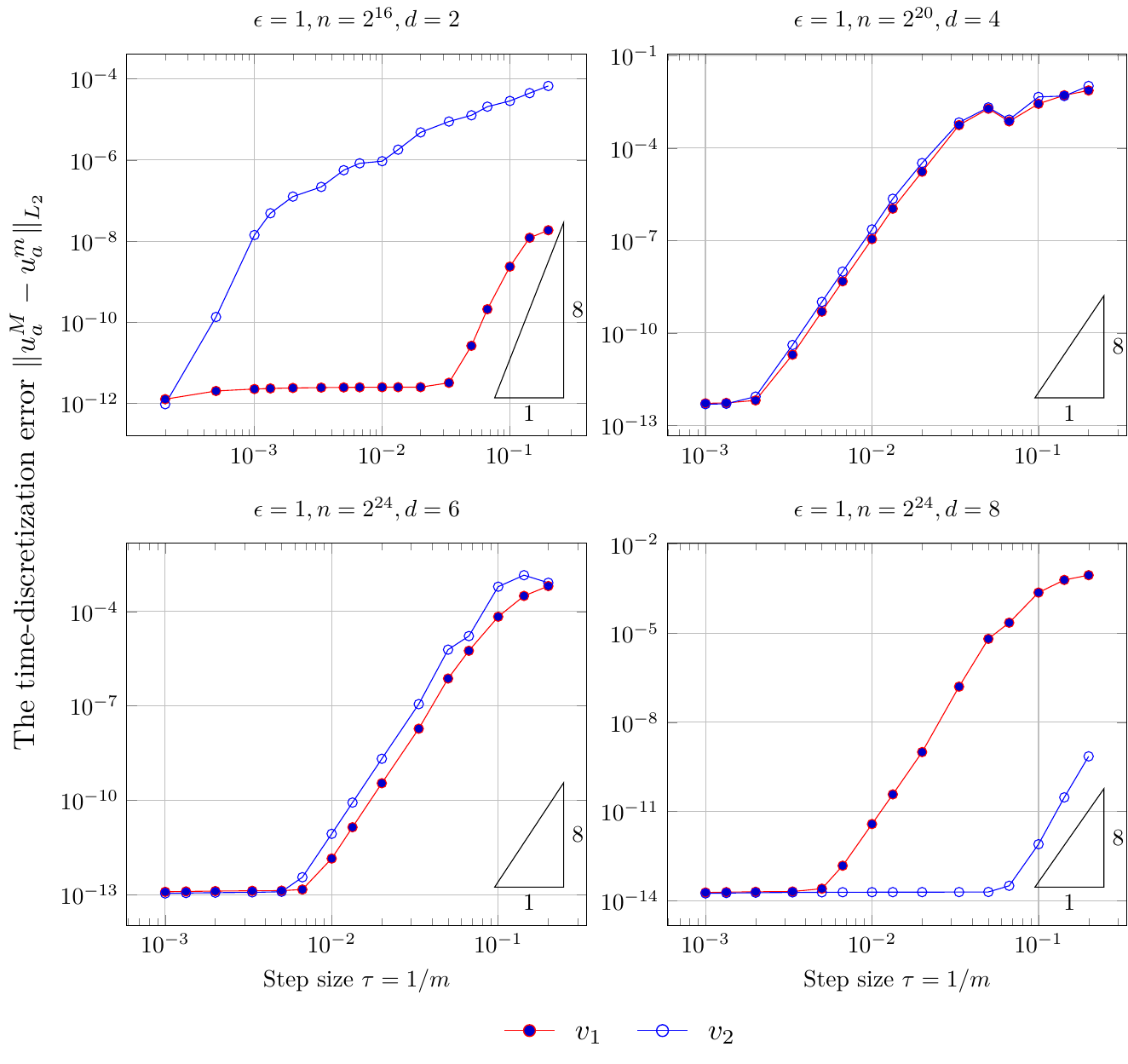}
\else
\begin{tikzpicture}[scale=0.8]
\begin{groupplot}[ group style={group name=myplots, group size=2 by 2,,vertical sep=1.6cm,horizontal sep=1.2cm},legend style={font=\fontsize{4}{5}\selectfont}]

\nextgroupplot[title={$\epsilon = 1, n=2^{16},d=2$}, xmajorgrids,
ymajorgrids,       xmode=log,
        ymode=log,ylabel={}, xlabel={ }]
\pgfplotstableread{8thgs2_d2 epsilon1N65536.dat}{\datatable}
\addplot+[color=red, mark=*] table  {\datatable};
\pgfplotstableread{8thghat2_d2 epsilon1N65536.dat}{\datatable}
\addplot+[color=blue, mark=o] table  {\datatable};
\logLogSlopeTriangle{0.95}{0.15}{0.1}{8}{black};

\nextgroupplot[title={$\epsilon = 1, n=2^{20},d=4$}, xmajorgrids,
ymajorgrids,       xmode=log,
        ymode=log,ylabel={},xlabel={ }]
\pgfplotstableread{8thgs2_d4 epsilon1N1048576.dat}{\datatable}
\addplot+[color=red, mark=*] table  {\datatable};
\pgfplotstableread{8thghat2_d4 epsilon1N1048576.dat}{\datatable}
\addplot+[color=blue, mark=o] table  {\datatable};
\logLogSlopeTriangle{0.95}{0.15}{0.1}{8}{black};

\nextgroupplot[title={$\epsilon = 1, n=2^{24},d=6$}, xmajorgrids,
ymajorgrids,       xmode=log,
        ymode=log,ylabel={},xlabel={Step size $\tau=1/m$}]
\pgfplotstableread{8thgs2_d6 epsilon1N16777216.dat}{\datatable}
\addplot+[color=red, mark=*] table  {\datatable};\label{gs}
\pgfplotstableread{8thghat2_d6 epsilon1N16777216.dat}{\datatable}
\addplot+[color=blue, mark=o] table  {\datatable};\label{gh}
\logLogSlopeTriangle{0.95}{0.15}{0.1}{8}{black};

\nextgroupplot[ title={$\epsilon = 1, n=2^{24},d=8$},xmajorgrids,
ymajorgrids,       xmode=log,
        ymode=log,xlabel={Step size $\tau=1/m$}]

\pgfplotstableread{8thgs2_d8 epsilon1N16777216.dat}{\datatable}
\addplot+[color=red, mark=*] table  {\datatable};
\pgfplotstableread{8thghat2_d8 epsilon1N16777216.dat}{\datatable}
\addplot+[color=blue, mark=o] table  {\datatable};
\logLogSlopeTriangle{0.95}{0.15}{0.1}{8}{black};

          \end{groupplot}
    \path (myplots c1r1.outer south west)
          -- node[anchor=south,rotate=90]{The time-discretization error $\|{u_a^M-u_a^m}\|_{L_2}$}
          (myplots c1r1.outer south west)
    ;
\path (myplots c1r1.south west|-current bounding box.south)--
      coordinate(legendpos)
      (myplots c2r1.south east|-current bounding box.south);
\matrix[
    matrix of nodes,ampersand replacement=\&,
    anchor=north,
    draw=none,
    outer sep=0.5em,
    draw=none
  ]at([below,yshift=1ex]legendpos)
  {
    \ref{gs}\& $v_1$ \&[5pt]
    \ref{gh}\& $v_2$ \&[5pt]\\};
\end{tikzpicture}
\fi
\caption{The time-discretization error with the eighth-order method.}
\label{fig:eighth}
\end{figure}

\section{Conclusion}\label{sec:conclusion}

We proposed a numerical method to solve the TDSE. With our method using the time step size $\Deltat$,
 the temporal discretization error converges like $\mathcal{O}((\Deltat)^p)$ given that the potential function is in Korobov space of smoothness greater than $2p+1/2$.
The numerical results (which are performed from $2$ up to $8$ dimensions) confirmed the theory and the rate of error convergence is consistent.
By using rank-$1$ lattices, calculations of the time stepping operator and multiplications are efficiently done by only using one-dimensional FFTs. 
 
 Pseudo-spectral methods are widely used technique for solving partial differential equations. It is a common choice to use regular grids, but the number of nodes increases exponential with~$d$. 
 We have shown an alternative, rank-$1$ lattice pseudo-spectral methods where the number of points can be chosen freely by the user. In combination with higher-order splitting methods, the proposed method solves the TDSE with higher-order convergence in time.

\end{document}